\newcommand{\bcen}{\begin{center}}     \newcommand{\ecen}{\end{center}}
\newcommand{\bay}{\begin{array}}      \newcommand{\eay}{\end{array}}
\newcommand{\beq}{\begin{eqnarray*}}      \newcommand{\eeq}{\end{eqnarray*}}
\def\rad{\mathrm{rad}}
\def\Hom{\mathrm{Hom}}
\def\Ker{\mathrm{Ker}}
\def\Im{\mathrm{Im}}
\def\mod{\mathrm{mod}}
\def\Mod{\mathrm{Mod}}
\def\id{\mathrm{id}}
\def\lim{\mathrm{lim}}
\def\per{\mathrm{per}}
\def\proj{\mathrm{proj}}
\def\Gproj{\mathrm{Gproj}}
\def\add{\mathrm{add}}
\begin{document}

\newtheorem{theorem}{Theorem}[section]
\newtheorem{proposition}[theorem]{Proposition}
\newtheorem{lemma}[theorem]{Lemma}
\newtheorem{corollary}[theorem]{Corollary}
\newtheorem{remark}[theorem]{Remark}
\newtheorem{example}[theorem]{Example}
\newtheorem{definition}[theorem]{Definition}
\newtheorem{question}[theorem]{Question}
\numberwithin{equation}{section}

\title{\large\bf
A note on singularity categories and triangular matrix algebras}

\author{\large Yongyun Qin}
\date{\footnotesize School of Mathematics, Yunnan Normal University, \\ Kunming, Yunnan 650500, China. E-mail:
qinyongyun2006@126.com
}

\maketitle

\begin{abstract} Let $\Lambda =
\left[\begin{array}{cc} A & 0 \\ M & B  \end{array}\right] $
be an Artin algebra and $_BM_A$ a $B$-$A$-bimodule.
We prove that there is a triangle equivalence $D_{sg}(\Lambda) \cong D_{sg}(A)\coprod D_{sg}(B)$
between the corresponding singularity categories if
$_BM$ is semi-simple and $M_A$ is projective. As a result, we
obtain a new method for describing the singularity categories of certain bounded quiver algebras.
\end{abstract}

\medskip

{\footnotesize {\bf Mathematics Subject Classification (2020)}:
16E35; 16G10; 16G20; 18G80.}

\medskip

{\footnotesize {\bf Keywords}: Singularity category;
Triangular matrix algebra; Gorenstein defect category; Recollement. }

\bigskip

\section{\large Introduction}

\indent\indent
Throughout $A$ is an Artin algebra, and $\mod A$ is the category of finitely generated
right $A$-modules. Following \cite{Buch21},
the {\it singularity category} $D_{sg}(A)$ of
$A$ is the Verdier quotient of the bounded derived category of finitely generated modules over $A$ by the
full subcategory of perfect complexes. It measures the homological singularity of an algebra $A$ in the sense
that $A$ has finite global
dimension if and only if  $D_{sg}(A)=0$. Moreover, the singularity category captures the stable homological
features of an algebra \cite{Buch21}. It is known
that the singularity category of
a Gorenstein algebra can be characterized by the stable category of Gorenstein projective (also called maxima Cohen-Macaulay) $A$-modules
\cite{Buch21, Hap91}.
Recently, many people are trying to describe the singularity categories for some special classes of algebras,
such as Nakayama algebras \cite{CY14, Rin13, Shen15}, gentle algebras \cite{Kal15}, Gorenstein monomial algebras
\cite{CSZ18, LZ21}, quadratic monomial algebras \cite{Chen18}
and algebras with radical square zero \cite{Chen11}.

An approach to investigate singularity categories is to
compare these categories between algebras related to each another.
This was studied in the context of idempotent reduction \cite{Chen09, PSS14, Shen21}, arrow removal
\cite{EPS22}, homological ideal \cite{Chen14}, simple gluing algebra \cite{Lu19},
triangular matrix algebra \cite{Chen09, LL15, Lu17, PSS14, Z13} and
recollement \cite{LL15, Qin20}. In particular, Lu proved that if $\Lambda =
\left[\begin{array}{cc} A & 0 \\ M & B  \end{array}\right]$ is a finite-dimensional
triangular matrix algebra over a field
and $_BM_A$ is projective as a $B$-$A$-bimodule,
then the singularity category and the Gorenstein defect category of $\Lambda$
are equivalent to the direct sum of those of $A$ and $B$ (see the proof of \cite[Proposition 4.2 and Theorem 4.4]{Lu17}).
Recall that the Gorenstein defect category is a Verdier quotient of the singularity category by modulo
the isomorphic image of the stable category of Gorenstein projective modules, and
in general, a triangle equivalence on singularity category can not produce such an equivalence on Gorenstein defect category.
So the result of Lu motivates
the following natural questions: Can we weaken the hypotheses of Lu
to just get the desired equivalence on singularity category, but not on Gorenstein defect category?
or, under what conditions do we have an equivalence
$D_{sg}(\Lambda) \cong D_{sg}(A)\coprod D_{sg}(B)$? We answer the second question by the following theorem,
which is listed as Theorem~\ref{theorem-main} in this paper.

\medskip

{\bf Theorem I.}  {\it Let $\Lambda =
\left[\begin{array}{cc} A & 0 \\ M & B  \end{array}\right] $
be an Artin algebra and $_BM_A$ a $B$-$A$-bimodule. If
$_BM$ is semi-simple and $M_A$ is projective,
then there is a triangle equivalence $D_{sg}(\Lambda) \cong D_{sg}(A)\coprod D_{sg}(B)$.}

\medskip

Theorem I can be used to reduce the singularity categories of certain bounded quiver algebras.
Let $\Lambda = kQ/I$ be a bounded quiver algebra
over a field $k$. Suppose there exist two disjoint full subquivers
of $Q$, named as $\Gamma$ and $\bar{\Gamma}$ such that: (1) $Q_0=\Gamma_0\cup \bar{\Gamma}_0$;
(2) there is at least one arrow from $\bar{\Gamma}_0$ to $\Gamma_0$,
and there are no arrows from $\Gamma_0$ to $\bar{\Gamma}_0$; (3) $I$ is generated
by $I\cap k\Gamma$, $I\cap k\bar{\Gamma}$ and $\alpha\beta$, where
$\alpha$ is an arrow in $\bar{\Gamma}_1$ and $\beta$ is an arrow form $\bar{\Gamma}_0$ to $\Gamma_0$.
Then $D_{sg}(\Lambda) \cong D_{sg}(A)\coprod D_{sg}(B)$ with $A=k\Gamma/(I\cap k\Gamma)$ and
$B=k\bar{\Gamma}/(I\cap k\bar{\Gamma})$. For more detail, see Corollary~\ref{cor-quiver} and Example~\ref{exam-1}.

We remark that the above reduction situation was considered
in \cite{BM19, BM22} with respect to the $\phi$-dimension,
and a similar result known for singularity category is
the arrow removal operation. That is, let $\Lambda$ be as above and
all conditions hold except we replace (3) by $I=\langle I\cap k\Gamma$, $I\cap k\bar{\Gamma}\rangle$,
then there is a triangle equivalence $D_{sg}(\Lambda) \cong D_{sg}(A)\coprod D_{sg}(B)$
which induces an equivalence between the Gorenstein defect categories
\cite{EPS22, Qin22}. Hence, Corollary~\ref{cor-quiver} weaken the condition of \cite{EPS22}
if we only consider the equivalence on singularity category, see Remark~\ref{remark-1}.

This paper is organized as follows. In section 2, we recall some relevant
definitions and conventions.
In section 3 we prove Theorem I,
and in section 4, we use Theorem I to
reduce the singularity categories of certain bounded quiver algebras.

 \section{\large Definitions and conventions}\label{Section-definitions and conventions}

\indent\indent In this section we will fix our notations and recall some basic definitions.

Throughout, all the algebras are Artin algebras over a commutative Artinian ring $R$.
Let $A$ be such an algebra and let $\rad (A)$ be the
Jacobson radical of $A$.  We denote by $\Mod A$ the
category of right $A$-modules, and we view left $A$-modules
as right $A^{op}$-modules, where $A^{op}$ is the opposite algebra of $A$.
Denote by $\mod A$ and $\proj A$
the full subcategories of $\Mod A$ consisting of all finitely
generated modules and finitely
generated projective modules, respectively.
For $M \in \mod A$,
let $\rad (M)$ be the
Jacobson radical of $M$,
and $\add (M)$ the full
additive subcategory of $\mod A$ consisting of all direct summands of finite sums of copies of $M$.

Let
$K^b (\proj A)$  be the bounded homotopy category of complexes
over $\proj A$, and
$\mathcal{D}(\Mod A)$ (resp. $\mathcal{D}^b(\mod A)$)
the derived category (resp. bounded derived category) of complexes over $\Mod A$ (resp. $\mod A$).
We denote by $[-]$ the shift functor on complexes.

Usually, we
just write $\mathcal{D} A$ (resp. $\mathcal{D}^b(A)$) instead of $ \mathcal{D}(\Mod A)$ (resp. $ \mathcal{D}^b(\mod A)$).
Up to isomorphism, the objects in $K^{b}(\proj A)$ are
precisely all the compact objects in $\mathcal{D} A$. For
convenience, we do not distinguish $K^{b}(\proj A)$ from the {\it
perfect derived category} $\mathcal{D}_{\per}(A)$ of $A$, i.e., the
full triangulated subcategory of $\mathcal{D} A$ consisting of all
compact objects, which will not cause any confusion. Moreover, we
also do not distinguish
$\mathcal{D}^b(A)$ from its essential image under the
canonical embedding into $\mathcal{D} A$.

Following \cite{Buch21, Orl04}, the {\it singularity category} of $A$ is the
Verdier quotient $D_{sg}(A) = \mathcal{D}^b(A)/K^{b}(\proj A)$.
A finitely
generated $A$-module $M$ is called
{\it Gorenstein projective} if there is an
exact sequence $$\xymatrix{P^\bullet = \cdots \ar[r]& P^{-1}
\ar[r]^{d^{-1}} & P^0 \ar[r]^{d^{0}} & P^1
\ar[r] & \cdots} $$
of $\proj A$ with $M= \Ker d^0$ such that $\Hom _A(P^\bullet, Q)$ is exact for
every $Q \in \proj A$. Denote by $\Gproj A$
the subcategory of $\mod A$ consisting of Gorenstein projective modules.
It is well known that $\Gproj A$
is a Frobenius category, and hence its stable category $\underline{\Gproj} A$ is a triangulated category.
Moreover, there is a canonical triangle functor $F: \underline{\Gproj} A \rightarrow D_{sg}(A)$
sending a Gorenstein projective module to the corresponding stalk complex concentrated in degree zero
\cite{Buch21}, and
the Verdier quotient $D_{def}(A):=D_{sg}(A)/ \Im F$
is called the {\it Gorenstein defect
category} of $A$ \cite{BJO15}.

Let $\mathcal{T}_1$, $\mathcal{T}$ and $\mathcal{T}_2$ be
triangulated categories. A {\it recollement} of $\mathcal{T}$
relative to $\mathcal{T}_1$ and $\mathcal{T}_2$ is given by
$$\xymatrix@!=4pc{ \mathcal{T}_1 \ar[r]^{i_*=i_!} & \mathcal{T} \ar@<-3ex>[l]_{i^*}
\ar@<+3ex>[l]_{i^!} \ar[r]^{j^!=j^*} & \mathcal{T}_2
\ar@<-3ex>[l]_{j_!} \ar@<+3ex>[l]_{j_*}} \eqno {\rm (R)} $$
such that

(R1) $(i^*,i_*), (i_!,i^!), (j_!,j^!)$ and $(j^*,j_*)$ are adjoint
pairs of triangle functors;

(R2) $i_*$, $j_!$ and $j_*$ are full embeddings;

(R3) $j^!i_*=0$ (and thus also $i^!j_*=0$ and $i^*j_!=0$);

(R4) for each $X \in \mathcal {T}$, there are triangles

$$\begin{array}{l} j_!j^!X \rightarrow X  \rightarrow i_*i^*X  \rightarrow
\\ i_!i^!X \rightarrow X  \rightarrow j_*j^*X  \rightarrow
\end{array}$$ where the arrows to and from $X$ are the counits and the
units of the adjoint pairs respectively \cite{BBD82}.

An {\it left (resp. right) recollement} of $\mathcal{T}$
relative to $\mathcal{T}_1$ and $\mathcal{T}_2$ is the upper (resp. lower) two rows of (R)
such that $i^*,i_*,j_!$ and $j^!$ (resp. $i_*,i^!,j^!$ and $j_*$)
satisfy the above conditions \cite{BGS88,Koe91,Par89}.
The left (resp. right) recollement
also called upper (resp. lower) recollement in some literature.

Let $\mathcal{T}_1$, $\mathcal{T}$ and $\mathcal{T}_2$ be
triangulated categories, and $n$ a positive integer. An {\it
$n$-recollement} of $\mathcal{T}$ relative to $\mathcal{T}_1$ and
$\mathcal{T}_2$ is given by the following $n+2$ layers of triangle functors
$$\xymatrix@!=4pc{ \mathcal{T}_1 \ar@<+1ex>[r] \ar@<-3ex>[r]_\vdots & \mathcal{T}
\ar@<+1ex>[r]\ar@<-3ex>[r]_\vdots \ar@<-3ex>[l] \ar@<+1ex>[l] &
\mathcal{T}_2 \ar@<-3ex>[l] \ar@<+1ex>[l]}$$ such that every
consecutive three layers form a recollement \cite{BGS88, QH16}.

\section{Singularity categories and triangular matrix
algebras}
\indent\indent Let $A$ and $B$ be two Artin algebras, and $_BM_A$ a $B$-$A$-bimodule
with $_BM$ semi-simple and $M_A$ projective. Assume that $\Lambda =
\left[\begin{array}{cc} A & 0 \\ M & B  \end{array}\right] $
is a triangular matrix algebra, and $e_1 =
\left[\begin{array}{cc} 1 & 0 \\ 0& 0  \end{array}\right] $, $e_2=
\left[\begin{array}{cc} 0 & 0 \\ 0& 1  \end{array}\right] $ are two idempotents. Then it follows from \cite[Example 3.4]{AKLY17}
that there is a
$2$-recollement
$$\xymatrix@!=9pc{ \mathcal{D}A \ar@<+1.5ex>[r]|{i_*} \ar@<-4.5ex>[r] &
\mathcal{D}\Lambda \ar@<+1.5ex>[r]|{j^*} \ar@<-4.5ex>[r] \ar@<-4.5ex>[l] \ar@<+1.5ex>[l]|{i^!} &
\mathcal{D}B  \ar@<-4.5ex>[l] \ar@<+1.5ex>[l]|{j_*}},$$
where $i_*=- \otimes^L_{A} e_1\Lambda $,
$i^!=- \otimes_{\Lambda} \Lambda e_1$, $j^*=-\otimes _\Lambda \Lambda e_2$ and
$j_*=-\otimes _B^L \Lambda/\Lambda e_1 \Lambda$.
Since $_A(e_1\Lambda)=_A$$A$ and $_B(\Lambda/\Lambda e_1 \Lambda)=_B$$B$, we have that
$i_*\cong - \otimes_{A} e_1\Lambda $ and $j_*\cong -\otimes _B \Lambda/\Lambda e_1 \Lambda$, and thus
$i_*$ (resp. $j_*$) sends modules over $A$ (resp. $B$) to modules over $\Lambda$.
It is clear that $i_*$, $i^!$, $j_*$ and $j^*$ restrict to exact functors between module categories, and
then they induce four triangle functors on
bounded derived categories. By \cite[Lemma 2.9 (e)]{AKLY17},
we have that both $i_*$ and $j^*$ restrict to $K^b(\proj)$.
Further,
since $i^!\Lambda=\Lambda e_1\cong A\oplus M$ and $M_A$ is projective, we have that
$i^!$ restricts to $K^b(\proj)$, and so does $j_*$ (see \cite[Proposition 3.2]{AKLY17}).
Therefore, $i_*$, $i^!$, $j_*$ and $j^*$ induce four triangle functors on singularity
categories, which are denoted by $\widetilde{i}_*$, $\widetilde{i}^!$, $\widetilde{j}_*$ and $\widetilde{j}^*$ respectively,
and it follows from \cite[Lemma 2.3]{Lu17} that there is a
right recollement of singularity
categories
$$\xymatrix@!=9pc{ D_{sg}(A) \ar@<+1.5ex>[r]|{\widetilde{i}_*} &
D_{sg}(\Lambda) \ar@<+1.5ex>[r]|{\widetilde{j}^*} \ar@<+1.5ex>[l]|{\widetilde{i}^!} &
D_{sg}(B) \ar@<+1.5ex>[l]|{\widetilde{j}_*}}.$$

The following lemmas are crucial in our observation.

\begin{lemma}\label{lem-syz}
Let $i_*=- \otimes_{A} e_1\Lambda :\mod A\rightarrow \mod \Lambda$
and $j_*=-\otimes _B \Lambda/\Lambda e_1 \Lambda :\mod B\rightarrow \mod \Lambda$
be two functors between module categories. Then for any $X\in \mod A$ and $Y\in \mod B$, we have
$\Omega _\Lambda (i_*
(X))\in \Im i_*$ and $\Omega _\Lambda (j_*
(Y))\cong j_*(Y')\oplus P$, where $Y'\in \mod B$ and $P\in \proj \Lambda$.
\end{lemma}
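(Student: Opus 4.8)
The plan is to compute the syzygies directly from the explicit module structure of $\Lambda$-modules coming from $i_*$ and $j_*$. Recall that a right $\Lambda$-module can be identified with a triple $(X_A, Y_B, f)$ where $f\colon Y\otimes_B M \to X$ is an $A$-map (equivalently, a map $Y \to \Hom_A(M,X)$), and the functor $i_*$ sends $X \in \mod A$ to $(X, 0, 0)$, while $j_*$ sends $Y \in \mod B$ to $(0, Y, 0)$ since $\Lambda/\Lambda e_1\Lambda \cong B$ as an algebra and the bimodule action factors appropriately. The indecomposable projective $\Lambda$-modules are $e_1\Lambda = (A, 0, 0)$-part together with the bottom, more precisely $e_1\Lambda$ corresponds (as a right module) to $(A, M, \mathrm{id})$-type data while $e_2\Lambda$ corresponds to $(0, B, 0)$; I will state this in whatever normalization makes the triangular algebra conventions consistent with the $2$-recollement already fixed in the text.

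For the first claim, I would take a projective cover of $X$ in $\mod A$, say $P \twoheadrightarrow X$ with kernel $\Omega_A X$, and observe that $i_* = -\otimes_A e_1\Lambda$ is exact on the relevant short exact sequence only up to the bimodule $M$: applying $i_*$ to $0 \to \Omega_A X \to P \to X \to 0$ does not immediately give the $\Lambda$-syzygy because $i_*P$ need not be $\Lambda$-projective. Instead I would write down a projective cover of $i_*X = (X,0,0)$ in $\mod\Lambda$ explicitly — it will be built from copies of $e_1\Lambda$ mapping onto the $A$-component — and then identify its kernel. The key point is that the $B$-component and the connecting map of this kernel are controlled by $M\otimes_A P$ and $M \otimes_A X$; here is exactly where $M_A$ projective enters, ensuring $M\otimes_A(-)$ sends projectives to projectives so that the $B$-slot of the syzygy again lies in (a quotient construction reducing to) the image of $i_*$ after absorbing projective summands. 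So the first assertion follows by bookkeeping on the triple structure of the projective cover.

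For the second claim about $j_*(Y) = (0, Y, 0)$, I would again pick a projective cover $Q \twoheadrightarrow Y$ in $\mod B$ and lift it to a projective cover of $(0,Y,0)$ in $\mod\Lambda$. A projective $\Lambda$-module surjecting onto $(0,Y,0)$ must use $e_2\Lambda$-summands (which have $(0,B,0)$-shape) to hit the $B$-component; but $e_2\Lambda$ as a right $\Lambda$-module is genuinely $(0,B,0)$ only when $M$ acts trivially, and in general one needs $e_1\Lambda$-summands too to kill the induced map involving $M$. The kernel then splits, up to a projective summand $P$, into a piece of the form $(0, \Omega_B Y, 0) = j_*(\Omega_B Y)$ plus a correction term living in the image of $i_*$ — but since $_BM$ is semisimple and $M_A$ is projective, that correction term is itself projective as a $\Lambda$-module (this is precisely the role of the semisimplicity hypothesis: $M\otimes_A(\text{anything})$ is a semisimple, hence relatively trivial, $B$-module, forcing the connecting maps to vanish and the extra summand to be $i_*$ of a projective $A$-module, which is $\Lambda$-projective). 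Thus $\Omega_\Lambda(j_*Y) \cong j_*(Y') \oplus P$ with $Y' = \Omega_B Y$ and $P \in \proj\Lambda$, as claimed.

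The main obstacle I anticipate is the precise identification of the projective cover of $(0,Y,0)$ in $\mod\Lambda$ and the resulting decomposition of its kernel: one has to be careful that $e_1\Lambda$ summands are needed in the cover (because $e_1\Lambda$ has a nonzero $B$-component $M$ via the bimodule structure, wait — rather because $\mathrm{top}$ of the projective cover must match $\mathrm{top}(0,Y,0)$, which only involves simples supported at $B$, yet the radical layers mix), and that after removing projective direct summands the leftover is exactly $j_*$ of a $B$-module plus something projective. Managing these splittings cleanly — rather than drowning in the triple-notation — is where the hypotheses "$_BM$ semisimple" and "$M_A$ projective" must be invoked at just the right moment, and keeping track of which summands are genuinely projective over $\Lambda$ (as opposed to merely in the image of $i_*$ or $j_*$) is the delicate bit.
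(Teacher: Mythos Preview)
Your plan has the right skeleton but a genuine error in the identification of the indecomposable projective $\Lambda$-modules, and this error propagates into both parts of the argument.

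With $\Lambda = \left[\begin{smallmatrix} A & 0 \\ M & B \end{smallmatrix}\right]$ and right modules written as triples $(X_A, Y_B, f)$ with $f\colon Y\otimes_B M \to X$, the indecomposable projectives are $e_1\Lambda \cong (A,0,0)$ and $e_2\Lambda \cong (M,B,\id)$, not the other way around. In particular, for $P\in\proj A$ the module $i_*P=(P,0,0)$ \emph{is} $\Lambda$-projective. So the first claim is immediate: applying $i_*$ to a projective cover $0\to \Omega_A X \to P_X \to X \to 0$ already gives a short exact sequence in $\mod\Lambda$ with projective middle term, whence $\Omega_\Lambda(i_*X)\cong i_*(\Omega_A X)$ (up to projective summands). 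No hypothesis on $M$ is used here; your invocation of ``$M_A$ projective'' for this part is misplaced.

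For the second claim, your misidentification of $e_2\Lambda$ as $(0,B,0)$ obscures the actual mechanism. A projective cover of $(0,Y,0)$ is $(Q_Y\otimes_B M, Q_Y, \id)$ built from $e_2\Lambda$-summands, and its kernel is $(Q_Y\otimes_B M, \Omega_B Y, \iota\otimes\id)$ where $\iota\colon \Omega_B Y \hookrightarrow Q_Y$. The decisive step is to show $\iota\otimes\id = 0$: since $\Im\iota \subseteq \rad Q_Y = Q_Y\cdot\rad B$, the image of $\iota\otimes\id$ lands in $Q_Y\otimes_B (\rad B)M$, and $(\rad B)M = 0$ precisely because $_BM$ is semisimple. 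You gesture at ``forcing the connecting maps to vanish'' but do not supply this computation, and the expression ``$M\otimes_A(\text{anything})$ is a semisimple $B$-module'' is not well-formed (the tensor is on the wrong side). Once $\iota\otimes\id=0$, the kernel splits as $(Q_Y\otimes_B M,0,0)\oplus(0,\Omega_B Y,0)$; now $M_A$ projective is exactly what makes $(Q_Y\otimes_B M,0,0)$ lie in $\proj\Lambda$, so $P=(Q_Y\otimes_B M,0,0)$ and $Y'=\Omega_B Y$ work. Fix the projectives and make the vanishing of $\iota\otimes\id$ explicit, and your argument becomes the paper's.
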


\begin{proof}
It is known that a right $\Lambda$-module is identified with a triple $(X,Y,f)$,
where $X\in \mod A$, $Y\in \mod B$ and
$f:Y\otimes _BM\rightarrow X$ is a morphism of right $A$-modules. Clearly, the projective $\Lambda$-modules
are of the form $(P,0,0)$ and $(Q\otimes _BM,Q,\id)$, where $P\in \proj A$ and $Q\in \proj B$.
For any $X\in \mod A$ and $Y\in \mod B$, we have $i_*(X)=X\otimes_{A} e_1\Lambda\cong (X,0,0)$
and $j_*(Y)=Y\otimes _B \Lambda/\Lambda e_1 \Lambda\cong (0,Y,0)$.
Let $P_X$ be the projective cover of $X_A$, and $\Omega _A(X)$
the syzygy of $X_A$. Then we
have an exact sequence $$0\rightarrow (\Omega _A(X),0,0) \rightarrow (P_X,0,0) \rightarrow (X,0,0)\rightarrow 0.$$
Since $(P_X,0,0)\in \proj \Lambda$, we get that $\Omega _\Lambda (X,0,0)\cong (\Omega _A(X),0,0)$,
up to some direct summands of projective
$\Lambda$-modules. Therefore, $\Omega _\Lambda (i_*
(X))\cong (\Omega _A(X),0,0)= i_*(\Omega _A(X))\in \Im i_*$.

Let $Q_Y$ be the projective cover of $Y_B$, and $\Omega _B(Y)$ the syzygy of $Y_B$.
Then we have exact sequences
$$\xymatrix{0 \ar[r] & \Omega _B(Y)
\ar[r]^{i} & Q_Y \ar[r] & Y
\ar[r] & 0},$$ and
$$0\rightarrow (Q_Y\otimes _BM, \Omega _B(Y),i\otimes \id) \rightarrow (Q_Y\otimes _BM,Q_Y,\id) \rightarrow (0,Y,0)\rightarrow 0.$$
Since $(Q_Y\otimes _BM,Q_Y,\id)\in \proj \Lambda$, we get $\Omega _\Lambda (0,Y,0)\cong (Q_Y\otimes _BM, \Omega _B(Y),i\otimes \id)$.
Note that $\Im i\subseteq \rad (Q_Y)$, and then $\Im (i\otimes \id)\subseteq
\rad (Q_Y)\otimes _BM=(Q_Y\rad B)\otimes _BM\cong Q_Y\otimes _B \rad M$, which
is equal to zero since $_BM$ is semi-simple. Therefore, we conclude that $i\otimes \id=0$, and
then $$(Q_Y\otimes _BM, \Omega _B(Y),i\otimes \id)\cong (Q_Y\otimes _BM, 0,0)\oplus (0, \Omega _B(Y),0).$$
Since $M_A$ is projective and $Q_Y\in \add B$, we have that $Q_Y\otimes _BM\in \add M_A
\subseteq \proj A$, and then $(Q_Y\otimes _BM, 0,0)\in \proj \Lambda$. Let $P=(Q_Y\otimes _BM, 0,0)$
and $Y'=\Omega _B(Y)$. Then we get $\Omega _\Lambda (0,Y,0)\cong (Q_Y\otimes _BM, 0,0)\oplus (0, \Omega _B(Y),0)
=P\oplus j_*(Y')$, where $Y'\in \mod B$ and $P\in \proj \Lambda$.
\end{proof}

\begin{lemma}\label{lem-orth}
Keep the notations as above. Then $\Hom_{D_{sg}(\Lambda) }(\Im \widetilde{j}_*, \Im \widetilde{i}_*)=0$.
\end{lemma}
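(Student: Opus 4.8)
The plan is to show that any morphism in $D_{sg}(\Lambda)$ from an object of the form $\widetilde{j}_*(Y)$ to an object of the form $\widetilde{i}_*(X)$ must vanish, where $X \in \mathcal{D}^b(A)$ and $Y \in \mathcal{D}^b(B)$. Since both functors are triangle functors and $\mathcal{D}^b$ is generated by modules (shifts and cones of modules), a standard dévissage argument reduces the problem to the case where $X \in \mod A$ and $Y \in \mod B$ are genuine modules, sitting in $D_{sg}(\Lambda)$ as stalk complexes. So it suffices to prove $\Hom_{D_{sg}(\Lambda)}(j_*(Y), i_*(X)) = 0$ for $X \in \mod A$, $Y \in \mod B$.

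Next I would use the description of morphisms in the singularity category via syzygies: for modules $U, V$ over an Artin algebra, $\Hom_{D_{sg}(\Lambda)}(U, V) \cong \varinjlim_n \underline{\Hom}_\Lambda(\Omega^n_\Lambda U, \Omega^n_\Lambda V)$ (this is the standard Buchweitz-type formula, or one can argue that every morphism $j_*(Y) \to i_*(X)$ in $D_{sg}(\Lambda)$ is represented by a roof $j_*(Y) \xleftarrow{s} Z \to i_*(X)$ with $s$ having cone in $K^b(\proj\Lambda)$, and replace $Z$ by a high enough syzygy). Here Lemma~\ref{lem-syz} does the crucial work: iterating it, $\Omega^n_\Lambda(j_*(Y)) \cong j_*(Y_n) \oplus (\text{projective})$ for some $Y_n \in \mod B$, and $\Omega^n_\Lambda(i_*(X)) \cong i_*(X_n)$ up to projectives for some $X_n \in \mod A$. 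Thus every morphism in $D_{sg}(\Lambda)$ between these objects is, up to the direct limit, induced by an honest $\Lambda$-module homomorphism $j_*(Y_n) \to i_*(X_n)$, i.e.\ $(0, Y_n, 0) \to (X_n, 0, 0)$ in the triple description.

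The final step is the easy computation that there are no nonzero $\Lambda$-module maps $(0, Y', 0) \to (X', 0, 0)$: a morphism of triples $(X,Y,f) \to (X'',Y'',f'')$ is a pair of maps compatible with the structure maps, and the first component must be a map $0 \to X'$, hence zero. Therefore $\underline{\Hom}_\Lambda(\Omega^n_\Lambda j_*(Y), \Omega^n_\Lambda i_*(X)) = 0$ for all $n$, and passing to the colimit gives $\Hom_{D_{sg}(\Lambda)}(j_*(Y), i_*(X)) = 0$, which by the dévissage above yields $\Hom_{D_{sg}(\Lambda)}(\Im \widetilde{j}_*, \Im \widetilde{i}_*) = 0$.

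I expect the main obstacle to be the careful bookkeeping in the dévissage reduction from arbitrary objects of $\mathcal{D}^b(B)$ and $\mathcal{D}^b(A)$ to modules, together with making precise the claim that a morphism in the Verdier quotient between (syzygies of) modules is represented by an actual module homomorphism after passing to a sufficiently high syzygy — one must check that the projective direct summands appearing in Lemma~\ref{lem-syz} do not obstruct this, which they do not since they become zero in $D_{sg}(\Lambda)$ and in the stable Hom groups. The triple-module computation itself is immediate and poses no difficulty.
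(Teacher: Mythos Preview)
Your approach is essentially the same as the paper's: reduce to modules, use the colimit formula $\Hom_{D_{sg}(\Lambda)}(U,V)\cong\varinjlim_{l}\underline{\Hom}_\Lambda(\Omega^l U,\Omega^l V)$, invoke Lemma~\ref{lem-syz} to keep the syzygies in $\Im j_*$ and $\Im i_*$ modulo projectives, and finish with the trivial observation $\Hom_\Lambda((0,Y',0),(X',0,0))=0$.

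There is, however, one slip in your reduction step. A d\'evissage on $X^\bullet$ and $Y^\bullet$ does \emph{not} reduce the claim to the single statement $\Hom_{D_{sg}(\Lambda)}(j_*(Y),i_*(X))=0$ for modules $X,Y$: to show that the vanishing class is closed under shifts you are forced to absorb the shift into the other variable, so what you actually need is $\Hom_{D_{sg}(\Lambda)}(j_*(Y),i_*(X)[k])=0$ for all $k\in\mathbb{Z}$. The paper handles this explicitly. It first uses the stronger (and cleaner) fact that every object of $D_{sg}(A)$ or $D_{sg}(B)$ is already isomorphic to a shift of a module, so one is immediately reduced to computing $\Hom_{D_{sg}(\Lambda)}(j_*(Y),i_*(X)[k])$. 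Then it treats the three cases $k=0$, $k>0$, $k<0$ separately, replacing the shift by a syzygy via $[-1]\cong\Omega_\Lambda$ in $D_{sg}(\Lambda)$ and applying Lemma~\ref{lem-syz} once more before invoking the colimit formula. Your argument is easily repaired in exactly this way, so the gap is minor, but as written the $k\ne 0$ case is missing.
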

\begin{proof}
For any $X^\bullet \in D_{sg}(A)$, there exists some $n\in \mathbb{Z}$
and $X\in \mod A$ such that $X^\bullet \cong X[n]$ in $D_{sg}(A)$,
see \cite[Lemma 2.1]{Chen11}. Similarly, any object $Y^\bullet \in D_{sg}(B)$
is isomorphic to $Y[m]$, for some $m\in \mathbb{Z}$
and $Y\in \mod B$. Therefore, we have isomorphisms
\begin{align*}
	\Hom_{D_{sg}(\Lambda) }(\widetilde{j}_*(Y^\bullet), \widetilde{i}_*
(X^\bullet)) &\cong \Hom_{D_{sg}(\Lambda) }(\widetilde{j}_*(Y[m]), \widetilde{i}_*
(X[n])) \\
	&\cong \Hom_{D_{sg}(\Lambda) }(\widetilde{j}_*(Y), \widetilde{i}_*
(X)[n-m]) \\
	&\cong \Hom_{D_{sg}(\Lambda) }(j_*(Y), i_*
(X)[n-m]).
\end{align*}
Now we will claim $\Hom_{D_{sg}(\Lambda) }(\widetilde{j}_*(Y^\bullet), \widetilde{i}_*
(X^\bullet))=0$ under three cases.

\medskip

{\it Case 1.} $n-m=0$. Then it follows from \cite[Proposition 2.3]{Chen11}
or \cite[Example 2.3]{KV87} that $\Hom_{D_{sg}(\Lambda) }(j_*(Y), i_*
(X))\cong \underrightarrow{\lim}_{l\geq 0} \underline{\Hom}_\Lambda(\Omega ^l(j_*(Y)), \Omega^l(i_*
(X)))$. Clearly, $\underline{\Hom}_\Lambda(j_*(Y), i_*
(X))=\underline{\Hom}_\Lambda((0,Y,0), (X,0,0))=0$, and Lemma~\ref{lem-syz} implies that
$\underline{\Hom}_\Lambda(\Omega ^l(j_*(Y)), \Omega^l(i_*
(X)))\in \underline{\Hom}_\Lambda(\Im j_*, \Im i_*)=0,$ for any $l\geq 1$.
Therefore, we get that $\Hom_{D_{sg}(\Lambda) }(j_*(Y), i_*
(X))=0$.

\medskip

{\it Case 2.} $n-m>0$. Then we have
\begin{align*}
\Hom_{D_{sg}(\Lambda) }(j_*(Y), i_*
(X)[n-m]) &\cong \Hom_{D_{sg}(\Lambda) }(j_*(Y)[-n+m], i_*
(X)) \\
&\cong \Hom_{D_{sg}(\Lambda) }(\Omega _\Lambda ^{n-m}(j_*(Y)), i_*
(X)) \\
&\cong \underrightarrow{\lim}_{l\geq 0} \underline{\Hom}_\Lambda(\Omega ^{l+n-m}(j_*(Y)), \Omega^l(i_*
(X))).
\end{align*}
By Lemma~\ref{lem-syz}, we get $\underline{\Hom}_\Lambda(\Omega ^{l+n-m}(j_*(Y)), \Omega^l(i_*
(X)))\in \underline{\Hom}_\Lambda(\Im j_*, \Im i_*)$, which is equal to zero.
Therefore, $\Hom_{D_{sg}(\Lambda) }(j_*(Y), i_*
(X)[n-m])=0$.

\medskip

{\it Case 3.} $n-m<0$. Then we have $$\Hom_{D_{sg}(\Lambda) }(j_*(Y), i_*
(X)[n-m]) \cong \Hom_{D_{sg}(\Lambda) }(j_*(Y), \Omega _\Lambda ^{m-n}(i_*
(X))),$$ and we can prove $\Hom_{D_{sg}(\Lambda) }(j_*(Y), i_*
(X)[n-m])=0$ in the same way as Case 2.

\end{proof}

\begin{theorem}\label{theorem-main}
Let $_BM_A$ be a $B$-$A$-bimodule
with $_BM$ semi-simple and $M_A$ projective, and let $\Lambda =
\left[\begin{array}{cc} A & 0 \\ M & B  \end{array}\right] $.
Then $$D_{sg}(\Lambda) \cong D_{sg}(A)\coprod D_{sg}(B).$$
\end{theorem}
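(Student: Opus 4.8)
The plan is to deduce the theorem from the right recollement of singularity categories established in the text together with the two lemmas, by showing that this right recollement is in fact a recollement whose ``gluing'' splits. Recall the setup: we have triangle functors $\widetilde{i}_*: D_{sg}(A)\to D_{sg}(\Lambda)$ and $\widetilde{j}^*:D_{sg}(\Lambda)\to D_{sg}(B)$ with right adjoints $\widetilde{i}^!$ and right ``section'' $\widetilde{j}_*$, forming a right recollement. Since $M_A$ is projective, $i^!$ and $j_*$ restrict to perfect complexes, so we in fact have more functors available; the key extra input is Lemma~\ref{lem-orth}, which says $\Hom_{D_{sg}(\Lambda)}(\Im\widetilde{j}_*,\Im\widetilde{i}_*)=0$.

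First I would record the standard consequences of a recollement/right recollement: $\widetilde{i}_*$ and $\widetilde{j}_*$ are fully faithful, $\widetilde{j}^*\widetilde{i}_*=0$, and for every object $Z\in D_{sg}(\Lambda)$ there is a triangle
\[
\widetilde{j}_*\widetilde{j}^*Z \longrightarrow Z \longrightarrow \widetilde{i}_*\widetilde{i}^*Z \longrightarrow \widetilde{j}_*\widetilde{j}^*Z[1]
\]
coming from the right recollement (the unit/counit triangle), where $\widetilde{i}^*$ denotes the left adjoint of $\widetilde{i}_*$ — here I should be careful about which adjoints actually exist; since both $i_*$ and $j^*$ restrict to $K^b(\proj)$, the quotient functors descend and one does get the unit–counit triangle realizing each $Z$ as an extension of something in $\Im\widetilde{i}_*$ by something in $\Im\widetilde{j}_*$. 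The point of Lemma~\ref{lem-orth} is then that this extension has no nontrivial connecting map in the relevant direction: $\Hom_{D_{sg}(\Lambda)}(\widetilde{i}_*\widetilde{i}^*Z,\widetilde{j}_*\widetilde{j}^*Z[1])=0$ by the lemma (applied with a shift), so the triangle splits and $Z\cong \widetilde{i}_*\widetilde{i}^*Z\oplus\widetilde{j}_*\widetilde{j}^*Z$. Hence every object of $D_{sg}(\Lambda)$ decomposes as a direct sum of an object of $\Im\widetilde{i}_*$ and an object of $\Im\widetilde{j}_*$, and these two subcategories are ``orthogonal'' in the sense that $\Hom(\Im\widetilde{j}_*,\Im\widetilde{i}_*)=0$ (Lemma~\ref{lem-orth}) and $\Hom(\Im\widetilde{i}_*,\Im\widetilde{j}_*)=0$ — the latter because $\widetilde{j}^*\widetilde{i}_*=0$ gives $\Hom(\widetilde{i}_*X,\widetilde{j}_*Y)\cong\Hom(\widetilde{j}^*\widetilde{i}_*X,Y)=0$ using the adjunction $(\widetilde{j}^*,\widetilde{j}_*)$.

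With both orthogonality statements in hand, the decomposition $D_{sg}(\Lambda)=\Im\widetilde{i}_*\oplus\Im\widetilde{j}_*$ is a decomposition as a direct sum of triangulated subcategories (each is a thick subcategory closed under the relevant operations, and Hom-groups between them vanish in both directions), so $D_{sg}(\Lambda)\cong\Im\widetilde{i}_*\coprod\Im\widetilde{j}_*$ as triangulated categories. Finally I would identify $\Im\widetilde{i}_*\cong D_{sg}(A)$ and $\Im\widetilde{j}_*\cong D_{sg}(B)$: these are immediate because $\widetilde{i}_*$ and $\widetilde{j}_*$ are fully faithful triangle functors (part of the right recollement data), hence equivalences onto their images. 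Stringing these together yields $D_{sg}(\Lambda)\cong D_{sg}(A)\coprod D_{sg}(B)$.

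The main obstacle, and the place that needs the most care, is justifying that we really do get the splitting triangle realizing an arbitrary $Z$ as an extension of $\widetilde{i}_*\widetilde{i}^*Z$ by $\widetilde{j}_*\widetilde{j}^*Z$, since a priori we only have a \emph{right} recollement of singularity categories, and the left adjoint $\widetilde{i}^*$ of $\widetilde{i}_*$ is not part of that data. This is exactly where the hypothesis ``$M_A$ projective'' is used again: because $i^!$ and $j_*$ both restrict to $K^b(\proj)$ (noted in the text), the localization sequences upstairs are compatible enough that the canonical triangle $j_!j^!Z\to Z\to i_*i^*Z\to$ in $\mathcal D\Lambda$ descends to singularity categories; alternatively, one can produce the needed triangle directly from $\widetilde{i}_*,\widetilde{i}^!,\widetilde{j}^*,\widetilde{j}_*$ by a standard argument with the counit of $(\widetilde{j}^*,\widetilde{j}_*)$ and completing to a triangle, then showing the third term lies in $\Im\widetilde{i}_*$ using $\widetilde{j}^*\widetilde{i}_*=0$ and fullness of $\widetilde{j}_*$. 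Once that triangle is available, Lemma~\ref{lem-orth} does the rest essentially mechanically.
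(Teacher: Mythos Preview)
Your overall strategy matches the paper's: use the recollement triangle for an arbitrary object, kill the connecting morphism via orthogonality, conclude the triangle splits, and then use full faithfulness of $\widetilde{i}_*$ and $\widetilde{j}_*$ together with both orthogonalities to get the coproduct decomposition. However, you have written down the wrong triangle, and this is what manufactures the ``main obstacle'' you then struggle with.

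A \emph{right} recollement already gives, for every $X\in D_{sg}(\Lambda)$, the canonical triangle
\[
\widetilde{i}_*\widetilde{i}^!X \xrightarrow{\ \eta\ } X \xrightarrow{\ \varepsilon\ } \widetilde{j}_*\widetilde{j}^*X \xrightarrow{\ \omega\ } \widetilde{i}_*\widetilde{i}^!X[1],
\]
built from the counit of $(\widetilde{i}_*,\widetilde{i}^!)$ and the unit of $(\widetilde{j}^*,\widetilde{j}_*)$. No left adjoint $\widetilde{i}^*$ is needed. This is exactly the triangle the paper uses. Its connecting map $\omega$ lies in $\Hom_{D_{sg}(\Lambda)}(\Im\widetilde{j}_*,\Im\widetilde{i}_*)$, which is precisely what Lemma~\ref{lem-orth} annihilates; hence the triangle splits and $X\cong \widetilde{i}_*\widetilde{i}^!X \oplus \widetilde{j}_*\widetilde{j}^*X$.

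By contrast, your triangle $\widetilde{j}_*\widetilde{j}^*Z \to Z \to \widetilde{i}_*\widetilde{i}^*Z \to \widetilde{j}_*\widetilde{j}^*Z[1]$ is not one of the two standard recollement triangles, requires the existence of $\widetilde{i}^*$ (which is not part of the right-recollement data), and has its connecting map in $\Hom(\Im\widetilde{i}_*,\Im\widetilde{j}_*)$ --- so it would be killed by the recollement relation $\widetilde{j}^*\widetilde{i}_*=0$, \emph{not} by Lemma~\ref{lem-orth} as you claim. In short: swap $\widetilde{i}^*$ for $\widetilde{i}^!$, reverse the order of the first two terms, and your ``main obstacle'' evaporates; the argument then coincides with the paper's proof.
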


\begin{proof}
From the right recollement $$\xymatrix@!=9pc{ D_{sg}(A) \ar@<+1.5ex>[r]|{\widetilde{i}_*} &
D_{sg}(\Lambda) \ar@<+1.5ex>[r]|{\widetilde{j}^*} \ar@<+1.5ex>[l]|{\widetilde{i}^!} &
D_{sg}(B) \ar@<+1.5ex>[l]|{\widetilde{j}_*}},$$ we have that
$\Hom_{D_{sg}(\Lambda) }(\Im \widetilde{i}_*, \Im \widetilde{j}_*)=0$, and both $\widetilde{i}_*$
and $\Im \widetilde{j}_*$ are fully faithful.
On the other hand, it follows from Lemma~\ref{lem-orth} that $\Hom_{D_{sg}(\Lambda) }(\Im \widetilde{j}_*, \Im \widetilde{i}_*)$
$=0$,
and then the functor $(\widetilde{i}_*, \widetilde{j}_*):D_{sg}(A)\coprod D_{sg}(B)\rightarrow D_{sg}(\Lambda)$
is fully faithful. For any $X\in D_{sg}(\Lambda)$, there is a triangle
$$\xymatrix{ \widetilde{i}_*\widetilde{i}^!X \ar[r]^\eta & X
\ar[r]^\varepsilon & \widetilde{j}_*\widetilde{j}^*X \ar[r]^\omega & \widetilde{i}_*\widetilde{i}^!X[1]}$$
in $D_{sg}(\Lambda)$.
Since $\omega\in \Hom_{D_{sg}(\Lambda) }(\Im \widetilde{j}_*, \Im \widetilde{i}_*)=0$, we get that
$X\cong \widetilde{i}_*\widetilde{i}^!X\oplus
\widetilde{j}_*\widetilde{j}^*X$ in $D_{sg}(\Lambda)$; see \cite[1.4 Lemma]{Hap88} or
\cite[Proposition 4.8]{Miy00}. Therefore, the functor
$(\widetilde{i}_*, \widetilde{j}_*):D_{sg}(A)\coprod D_{sg}(B)\rightarrow D_{sg}(\Lambda)$
is dense, and then it induces a triangle equivalence between $D_{sg}(\Lambda)$ and $D_{sg}(A)\coprod D_{sg}(B)$.
\end{proof}

\section{Applications and examples}
\indent\indent
In this section, we will apply our main result to algebras given by quivers with relations.
Let $k$ be an algebraically closed field. Then any finite dimensional $k$-algebra
is isomorphic to $kQ/I$, where $Q$ is a finite quiver and $I$ is an admissible ideal.
A quiver $Q$ is a quadruples $(Q_0, Q_1, s, t)$, where $Q_0$ and $Q_1$ are finite sets of vertices and
arrows, $s$ and $t$ are functions sending any arrow in $Q_1$ to its starting point and ending point,
respectively. For arbitrary two arrows $\alpha _1$ and $\alpha _2$ with $t(\alpha _1) = s(\alpha _2)$, the composition of
$\alpha _1$ and $\alpha _1$ is denoted by $\alpha _1 \alpha _2$.

\begin{corollary}\label{cor-quiver}
Let $A = kQ_A/I_A$ and $B = kQ_B/I_B$ be two finite-dimensional
algebras, $(Q_\Lambda)_0 = (Q_A)_0 \cup (Q_B)_0$, and $(Q_\Lambda)_1 = (Q_A)_1 \cup (Q_B)_1 \cup \{\alpha _i  \}_{i\in J}$ where
$J$ is a finite subset of $\mathbb{N}$ and $\alpha _i$ is an arrow from $(Q_B)_0$ to $(Q_A)_0$ for any $i\in J$.
Let $\Lambda =kQ_\Lambda/I_\Lambda$ with $I_\Lambda$ being the ideal
generated by $I_A$, $I_B$ and $\alpha \alpha _i$, for any $\alpha \in (Q_B)_1$
and $i\in J$. Then $D_{sg}(\Lambda) \cong D_{sg}(A)\coprod D_{sg}(B).$
\end{corollary}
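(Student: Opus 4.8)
The plan is to realize $\Lambda=kQ_\Lambda/I_\Lambda$ as a triangular matrix algebra of the shape to which Theorem~\ref{theorem-main} applies, and then simply invoke that theorem. Set $e_1=\sum_{v\in (Q_A)_0}e_v$ and $e_2=\sum_{v\in (Q_B)_0}e_v$, the sums of the vertex idempotents on the two sides; they are orthogonal idempotents of $\Lambda$ with $e_1+e_2=1$. Every arrow of $Q_\Lambda$ lying outside $Q_A$ and $Q_B$ runs from $(Q_B)_0$ to $(Q_A)_0$, so no path of $Q_\Lambda$ can start in $(Q_A)_0$ and end in $(Q_B)_0$; hence $e_1(kQ_\Lambda)e_2=0$, so $e_1\Lambda e_2=0$ and $\Lambda\cong\left[\begin{array}{cc}e_1\Lambda e_1 & 0\\ e_2\Lambda e_1 & e_2\Lambda e_2\end{array}\right]$ is a triangular matrix algebra.

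Next I would identify the two corners and the bimodule. Since no arrow leaves $(Q_A)_0$, any path of $Q_\Lambda$ with both endpoints in $(Q_A)_0$ already lies in $Q_A$, so $e_1(kQ_\Lambda)e_1=kQ_A$; and inspecting the generators of $I_\Lambda$ shows $e_1 I_\Lambda e_1=I_A$, since a relation from $I_B$ or of the form $\alpha\alpha_i$ has its source in $(Q_B)_0$ and, when multiplied by paths on both sides, can land in $e_1(kQ_\Lambda)e_1$ only if some path runs $(Q_A)_0\to(Q_B)_0$, which is impossible. Hence $e_1\Lambda e_1\cong A$. Symmetrically, a path with both endpoints in $(Q_B)_0$ cannot use any $\alpha_i$ (it would then be unable to return), so $e_2(kQ_\Lambda)e_2=kQ_B$ and $e_2 I_\Lambda e_2=I_B$, giving $e_2\Lambda e_2\cong B$. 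Writing $M:=e_2\Lambda e_1$, viewed as a $B$-$A$-bimodule through these identifications, we obtain $\Lambda\cong\left[\begin{array}{cc}A & 0\\ M & B\end{array}\right]$.

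It then remains to check the two hypotheses of Theorem~\ref{theorem-main}. A path of $Q_\Lambda$ contributing to $M=e_2\Lambda e_1$ has source in $(Q_B)_0$ and target in $(Q_A)_0$, hence crosses over along exactly one arrow $\alpha_i$, and if the segment preceding $\alpha_i$ were a nontrivial path $q$ of $Q_B$, its last arrow $\alpha$ would produce a subpath $\alpha\alpha_i\in I_\Lambda$, so the path would be zero. Thus $M$ is spanned by the paths $\alpha_i r$ with $r$ a path of $Q_A$, and the same bookkeeping shows the only relations among these are the $\alpha_i\rho$ with $\rho\in I_A$; consequently $M\cong\bigoplus_{i\in J}e_{t(\alpha_i)}A$ as right $A$-modules, which is finitely generated projective because $J$ is finite. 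The same relation $\alpha\alpha_i$ forces $\rad(B)\cdot M=0$, as $\rad(B)\cong\rad(e_2\Lambda e_2)$ is spanned by residues of nontrivial paths of $Q_B$, each annihilating every $\alpha_i$; hence $_BM$ is a finitely generated $B/\rad(B)$-module, i.e.\ semisimple. Theorem~\ref{theorem-main} now yields $D_{sg}(\Lambda)\cong D_{sg}(A)\coprod D_{sg}(B)$.

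The one genuinely delicate point --- the step I would write out in full --- is the path-and-relation bookkeeping underpinning $e_1\Lambda e_1\cong A$, $e_2\Lambda e_2\cong B$ and the description of $M$: one must verify that $I_\Lambda$, though specified only by generators, really cuts $kQ_\Lambda$ into the block pattern $\left[\begin{array}{cc}A & 0\\ M & B\end{array}\right]$ with no relation leaking between blocks. This uses exactly the hypotheses that all crossing arrows point from $(Q_B)_0$ to $(Q_A)_0$ and that the mixed relations are precisely the $\alpha\alpha_i$; everything past it is a direct appeal to Theorem~\ref{theorem-main}.
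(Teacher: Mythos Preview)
Your proposal is correct and follows essentially the same route as the paper: realize $\Lambda$ as the triangular matrix algebra $\left[\begin{smallmatrix}A & 0\\ M & B\end{smallmatrix}\right]$ with $M=e_2\Lambda e_1$, verify that $_BM$ is semisimple via $\rad(B)\cdot M=0$ (from the relations $\alpha\alpha_i$) and that $M_A\cong\bigoplus_{i\in J}e_{t(\alpha_i)}A$ is projective, then invoke Theorem~\ref{theorem-main}. The paper's proof devotes most of its space precisely to the ``delicate point'' you flag---the injectivity of the map $\bigoplus_{i\in J}e_{t(\alpha_i)}A\to M$, i.e.\ checking that no unexpected relations leak into $M$ from $I_\Lambda$---so your instinct about where the actual work lies is exactly right.
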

\begin{proof}
Since there are no arrows from $(Q_A)_0$ to $(Q_B)_0$, we have that $\Lambda =
\left[\begin{array}{cc} A & 0 \\ M & B  \end{array}\right] $ with $M=e_B\Lambda e_A$
being a $B$-$A$-bimodule, where
$e_A =
\left[\begin{array}{cc} 1_A & 0 \\ 0& 0  \end{array}\right] $ and $e_B=
\left[\begin{array}{cc} 0 & 0 \\ 0& 1_B  \end{array}\right] $.
To get the desired assertion, in view of Theorem~\ref{theorem-main}, it suffices to show that
$_BM$ is semi-simple and $M_A$ is projective. These facts
were mentioned in \cite[Section 1]{BM22} under the case of Morita context algebra, which is an extension of triangular matrix algebra.
Here, we give a proof for readers' convenience.

First, note that $\rad (_BM)=(\rad B) M=(\rad B)e_B\Lambda e_A$,
which is equal to zero because $\alpha \alpha _j\in I_\Lambda$, for any $\alpha \in (Q_B)_1$
and $i\in J$. Hence we get that $\rad (_BM)=0$ and thus $_BM$ is semi-simple.

For any $i\in J$, let $t_i:=t(\alpha _i)$ and let $e_{t_i}$ be the idempotent
of $A$ corresponding to the vertex $t_i$.
Let $p_{i1}+I_A$, $\cdots$, $p_{ir_i}+I_A$ be a basis of $e_{t_i}A$, and let $f_i:
e_{t_i}A \rightarrow M=e_B\Lambda e_A$  be the morphism of right $A$-modules mapping
$p_{ij}+I_A$ to $\alpha_ip_{ij}+I_\Lambda$, for any $j=1, \cdots, r_i$. Since $I_A \subseteq I_\Lambda$
, we get that $f_i$ is well-defined. Now we will claim
$f=(f_i)_{i\in J}:\
\oplus _{i\in J} (e_{t_i}A) \rightarrow M$ is an isomorphism of right $A$-modules,
and then
$M_A$ is projective.  Let $q$ be a path in
$Q_\Lambda$ from $(Q_B)_0$ to $(Q_A)_0$, which is not in $I_\Lambda$. Since $(\rad B)e_B\Lambda e_A=0$,
there is some $i\in J$ such
that $q=\alpha _ip$, where $p$ is a path in $Q_A$ starting form $t_i$. So $f$ is surjective.
For any $(\sum _{j=1}^{r_i}a_{ij}p_{ij}+I_A) _{i\in J}\in \oplus _{i\in J} (e_{t_i}A)$
with $a_{ij}\in k$,
if $f((\sum _{j=1}^{r_i}a_{ij}p_{ij}+I_A) _{i\in J})=0$, then $\sum _{i\in J}\sum _{j=1}^{r_i}
a_{ij}\alpha_{i}p_{ij}\in I_{\Lambda}$. Note that $I_{\Lambda}=\langle I_A, I_B, \mathcal{S}\rangle $,
where $\mathcal{S}=\{\alpha \alpha _i | i\in J \ \mbox{and} \  \alpha \in (Q_B)_1 \}$.
So there exist $u\in \langle I_A\rangle$, $v\in \langle I_B\rangle$
and $w\in \langle \mathcal{S} \rangle$ such that $\sum _{i\in J}\sum _{j=1}^{r_i}
a_{ij}\alpha_{i}p_{ij}=u+v+w$. Since $u,v$ and $w$ are linear combinations of paths
from $(Q_B)_0$ to $(Q_A)_0$, we may assume that $u=\sum _{i\in J}k_i\alpha _iu_i$,
$v=\sum _{i\in J}b_i v_i\alpha _i p_i$ and $w=\sum _{i\in J}c_i\beta _i\alpha _iq_i$,
where $k_i, b_i, c_i\in k$, $u_i\in I_A$, $v_i\in I_B$, $\beta _i\in (Q_B)_1$,
and $p_i$, $q_i$ are paths in $Q_A$ starting from $t_i$. Therefore,
$\sum _{i\in J}\sum _{j=1}^{r_i}
a_{ij}\alpha_{i}p_{ij}=\sum _{i\in J}k_i\alpha _iu_i+\sum _{i\in J}b_i v_i\alpha _i p_i+\sum _{i\in J}c_i\beta _i\alpha _iq_i$
in $kQ_\Lambda$. Since all paths form a basis of $kQ_\Lambda$, we get that
$b_i=0=c_i$ for any $i\in J$, and $\sum _{i\in J}\sum _{j=1}^{r_i}
a_{ij}\alpha_{i}p_{ij}=\sum _{i\in J}k_i\alpha _iu_i$. Then we have $\sum _{i\in J}\alpha_{i}(\sum _{j=1}^{r_i}
a_{ij}p_{ij})=\sum _{i\in J}\alpha _i(k_iu_i)$ in $kQ$,
which implies that $\sum _{j=1}^{r_i}
a_{ij}p_{ij}=k_iu_i\in I_A$, for any $i\in J$. In conclusion,
we get that $(\sum _{j=1}^{r_i}a_{ij}p_{ij}+I_A) _{i\in J}=0$,
and then $f$ is injective.
\end{proof}

\begin{example}\label{exam-1}
{\rm Let $\Lambda = kQ/I$ be the algebra where $Q$ is the quiver
$$\xymatrix{1 \ar@(ul,dl)_\alpha & 2 \ar[l]_\beta \ar@(ur,dr)^\gamma}$$
and $I=\langle \alpha ^3, \gamma ^2, \gamma \beta\rangle$. Let $A=e_1\Lambda e_1=k[x]/\langle x^3\rangle$
and $B=e_2\Lambda e_2=k[x]/\langle x^2\rangle$. Then both $A$ and $B$ are selfinjective algebras,
and the algebras $A$, $B$ and $\Lambda$ satisfy all the conditions in Corollary~\ref{cor-quiver}.
So $D_{sg}(\Lambda) \cong D_{sg}(A)\coprod D_{sg}(B)\cong \underline{\mod} (k[x]/\langle x^3\rangle)\coprod \mod k$.
We remark that $\Lambda$ is a monomial algebra, and it follows from
\cite[Theorem 4.1]{CSZ18} that there are only
two indecomposable objects in the stable category
$\underline{\Gproj }\Lambda$.
However, $\Lambda$ is non-Gorenstein and thus
the embedding $\underline{\Gproj }\Lambda \hookrightarrow D_{sg}(\Lambda)$
is not an equivalence.
Indeed, there are three indecomposable objects in $D_{sg}(\Lambda)$ by our calculation.
}
\end{example}

\begin{remark}\label{remark-1}
{\rm In general, the singular equivalence in Corollary~\ref{cor-quiver} can not induces
an equivalence on Gorenstein defect category. Consider the above example.
The Gorenstein defect categories of $A$ and $B$ are zero, but this is not true for $\Lambda$
since $\Lambda$ is non-Gorenstein. Therefore, the Gorenstein
defect category of $\Lambda$ is not equivalent to the direct sum of those of $A$ and $B$.
However, if we replace
$I=\langle \alpha ^3, \gamma ^2, \gamma \beta\rangle$ by $I=\langle \alpha ^3, \gamma ^2\rangle$,
then we get a singular equivalence $D_{sg}(\Lambda) \cong D_{sg}(A)\coprod D_{sg}(B)$
which induces
an equivalence on the corresponding Gorenstein defect categories, see
\cite[Corollary 4.4]{EPS22} and \cite[Corollary 4.1]{Qin22}.

}
\end{remark}

\noindent {\footnotesize {\bf ACKNOWLEDGMENT.} This work is supported by
the National Natural Science Foundation of China (12061060, 11961007),
the project of Young and Middle-aged Academic and Technological leader of Yunnan
(Grant No. 202305AC160005) and
the Scientific and Technological Innovation Team of Yunnan(Grant No.
2020CXTD25).}


\begin{thebibliography}{99}

\bibitem{AKLY17} L. Angeleri H\"{u}gel, S. K\"{o}nig, Q. Liu and D.
Yang, Ladders and simplicity of derived module categories, J. Algebra 472 (2017), 15--66.

\bibitem{BM19} M. Barrios and G. Mata, On algebras of $\Omega ^n$-finite and
$\Omega ^\infty$-infinite representation type, arXiv:1911.02325.

\bibitem{BM22} M. Barrios and G. Mata, The Igusa-Todorov $\phi$-dimension on Morita context algebras,
arXiv:2211.06473v1.

\bibitem{BBD82} A. A. Beilinson, J. Bernstein and P. Deligne, Faisceaux
pervers, Ast\'{e}risque, vol. 100, Soc. Math. France, 1982.


\bibitem{BGS88} A. A. Beilinson, V. A. Ginsburg and V. V. Schechtman,
Koszul duality, J. Geom. Phys. 5 (1988), no. 3, 317--350.

\bibitem{BJO15} P. A. Bergh, D. A. J{\o}rgensen and S. Oppermann,
The Gorenstein defect category, Q. J. Math 66 (2015), 459--471.

\bibitem{Buch21} R.-O. Buchweitz, Maximal Cohen-Macaulay modules and Tate-cohomology over Gorenstein rings,
with appendices by L.L. Avramov, B. Briggs, S.B. Iyengar, and J.C. Letz, Math. Surveys and
Monographs, vol. 262, Amer. Math. Soc., 2021.

\bibitem{Chen09} X. W. Chen, Singularity categories, Schur functors and triangular matrix rings, Algebr. Represent. Theor. 12 (2009),
181--191.

\bibitem{Chen11} X. W. Chen, The singularity category of an algebra with radical square zero, Doc. Math. 16 (2011), 921--936.

\bibitem{Chen14} X. W. Chen, Singular equivalences induced by homological epimorphisms, Proc. Am. Math. Soc.
142 (2014), no. 8, 2633--2640.

\bibitem{Chen18} X. W. Chen, The singularity category of a quadratic monomial algebra, Quart. J. Math. 69 (2018), 1015--1033.

\bibitem{CSZ18} X. W. Chen, D. W. Shen, and G. D. Zhou, The Gorenstein-projective modules over a monomial
algebra, Proc. Royal Soc. Edin. 148 A (2018), 1115--1134.

\bibitem{CY14} X. W. Chen and Y. Ye, Retractions and Gorenstein homological properties, Algebr. Represent. Theor.
17 (2014), 713--733.

\bibitem{EPS22} K. Erdmann, C. Psaroudakis and {\O}. Solberg,
Homological invariants of the arrow removal operation, Represent. Theory 26 (2022), 370--387.

\bibitem{Hap88} D. Happel, Triangulated categories in the representation theory of finite dimensional algebras, Lond.
Math. Soc. Lect. Note Ser. 119 (1988).

\bibitem{Hap91} D. Happel, On Gorenstein algebras, in: Progr. Math., vol. 95, Birkh\"{a}user Verlag, Basel, 1991,
pp. 389--404.

\bibitem{Kal15} M. Kalck, Singularity categories of gentle algebras, Bull. London Math. Soc. 47 (2015), no. 1, 65--74.

\bibitem{KV87} B. Keller and D. Vossieck, Sous les cat\'{e}gories d\'{e}riv\'{e}es (French, with English
summary), C. R. Acad. Sci. Paris S\'{e}r. I Math. 305 (1987), no. 6, 225--228.

\bibitem{Koe91} S. K\"{o}nig, Tilting complexes, perpendicular categories and
recollements of derived categories of rings, J. Pure Appl. Algebra
73 (1991), 211--232.

\bibitem{LL15} P. Liu and M. Lu, Recollements of singularity categories and monomorphism categories, Comm. Algebra
43 (2015), 2443--2456.

\bibitem{Lu17} M. Lu, Gorenstein defect categories of triangular matrix algebras,
J. Algebra 480 (2017), 346--367.

\bibitem{Lu19} M. Lu, Gorenstein properties of simple gluing algebras, Algebr. Represent. Theor. 22 (2019),
517--543.

\bibitem{LZ21} M. Lu and B. Zhu, Singularity categories of Gorenstein monomial algebras, J. Pure Appl. Algebra 225 (2021),
no. 8, 106651.

\bibitem{Miy00} J. I. Miyachi, Derived Categories with Applications to Representations of Algebras, Chiba
Univ., 2000, available at: http://www.u-gakugei.ac.jp/$\thicksim$miyachi/seminar.html.

\bibitem{Orl04} D. Orlov, Triangulated categories of singularities and D-branes in Landau-Ginzburg models,
Trudy Steklov Math. Institute 204 (2004), 240--262.

\bibitem{Par89} B. J. Parshall, Finite dimensional algebras and algebraic groups,
Contemp. Math 82 (1989), 97--114.

\bibitem{PSS14} C. Psaroudakis, {\O}. Skarts{\ae}terhagen and {\O}. Solberg, Gorenstein categories, singular equivalences and finite generation of cohomology rings in recollements, Trans. Am. Math. Soc. Ser. B 1 (2014), 45--95.

\bibitem{Qin20} Y. Y. Qin, Eventually homological isomorphisms in recollements of derived categories, J.
Algebra 563 (2020), 53--73.

\bibitem{Qin22} Y. Y. Qin, Eventually homological isomorphisms and Gorenstein projective modules, arXiv:2203.09211.

\bibitem{QH16} Y. Y. Qin and Y. Han, Reducing homological
conjectures by $n$-recollements, Algebr. Represent. Theor. 19 (2016), no. 2, 377--395.

\bibitem{Rin13} C. M. Ringel, The Gorenstein projective modules for the Nakayama algebras. I, J. Algebra 385
(2013), 241--261.

\bibitem{Shen15} D. W. Shen,
The singularity category of a Nakayama algebra,
J. Algebra 429 (2015), 1--18.

\bibitem{Shen21} D. W. Shen, A note on singular equivalences and idempotents, Proc. Am. Math. Soc.
149 (2021), no. 10, 4067--4072.

\bibitem{Z13} P. Zhang, Gorenstein-projective modules and symmetric recollements, J. Algebra 388 (2013), 65--80.


\end{thebibliography}
\end{document}